\documentclass[12pt,reqno]{amsproc}
  \usepackage{latexsym} 
  \usepackage[all]{xy}
  \usepackage{amsfonts} 
  \usepackage{amsthm} 
  \usepackage{amsmath} 
  \usepackage{amssymb}
  \usepackage{pifont}  
  \usepackage{enumerate}
  \xyoption{2cell}

 \usepackage{tikz}
\usetikzlibrary{arrows}


  \def\<{{\langle}} 
  \def\>{{\rangle}}

  \def\note#1{{}}

  \def\note#1{} 

  \def\cO{{\mathcal O}}

  \def\beq{\begin{equation}} 
  \def\eeq{\end{equation}}

 \def\coker{\mathrm{coker}}



  \newcounter{zlist} 
  \newenvironment{zlist}{\begin{list}{(\arabic{zlist})}{ 
  \usecounter{zlist}\leftmargin2.5em\labelwidth2em\labelsep0.5em 
  \topsep0.6ex
  \parsep0.3ex plus0.2ex minus0.1ex}}{\end{list}}

  \newcounter{blist} 
  \newenvironment{blist}{\begin{list}{(\alph{blist})}{ 
  \usecounter{blist}\leftmargin2.5em\labelwidth2em\labelsep0.5em 
  \topsep0.6ex 
  \parsep0.3ex plus0.2ex minus0.1ex}}{\end{list}} 

  \newcounter{rlist}


\def\stac#1{\raise-.2cm\hbox{$\stackrel{\displaystyle\otimes}{\scriptscriptstyle{#1}}$}}

\def\cten#1{\raise-.2cm\hbox{$\stackrel{\displaystyle\widehat{\otimes}}
{\scriptscriptstyle{#1}}$}}

\textheight 22.5 cm
\textwidth 15.5cm
\topmargin -.25in \headheight 0.3in \headsep .5cm
\oddsidemargin .15in \evensidemargin .15in
\topskip 12pt

  \def\Label#1{\label{#1}\ifmmode\llap{[#1] }\else 
  \marginpar{\smash{\hbox{\tiny [#1]}}}\fi} 
  \def\Label{\label}

  \newtheorem{proposition}{Proposition}[section]
  \newtheorem{lemma}[proposition]{Lemma} 
   
  \newtheorem{theorem}[proposition]{Theorem} 

\theoremstyle{definition}

  \theoremstyle{remark} 
  \newtheorem{remark}[proposition]{Remark}

  \newcounter{c} 
   
  \newcommand{\etyk}[1]{\vspace{-7.4mm}$$\begin{equation}\Label{#1} 
  \addtocounter{c}{1}} 
  \renewcommand{\]}{\ifnum \value{c}=1 $$\else \end{equation}\fi} 
  \setcounter{tocdepth}{2}









\def\KK{{\mathbb K}}

\def\NN{{\mathbb N}}

\newcommand{\Cc}{\mathcal{C}}

\newcommand{\Ii}{\mathcal{I}}

\def\*C{{}^*\hspace*{-1pt}{\Cc}}

\def\text#1{{\rm {\rm #1}}}



 \def\1{\mathbf{1}}

\def\bomega{\bar{\omega}}
\def\bnu{\bar{\nu}}

\begin{document}

\title[Differential smoothness of Hopf algebras]{Differential smoothness of affine Hopf algebras of Gelfand-Kirillov dimension two}

\author{Tomasz Brzezi\'nski}
 \address{ Department of Mathematics, Swansea University, 
  Swansea SA2 8PP, U.K.} 
  \email{T.Brzezinski@swansea.ac.uk}   
 \subjclass{16S36; 58B32;16T05} 
 \keywords{Differentially smooth algebra; integrable calculus; Ore extension; Hopf algebra}
 
\begin{abstract}
Two-dimensional integrable differential calculi  for classes of Ore extensions of the polynomial ring and the Laurent polynomial ring in one variable are constructed. Thus it is concluded that all affine pointed Hopf domains of Gelfand-Kirillov dimension two which are not polynomial identity rings are differentially smooth.
\end{abstract}
\maketitle

\section{Introduction}
 An affine algebra $A$ is said to be {\em differentially smooth} if it admits an integrable differential calculus of dimension equal to the Gelfand-Kirillov dimension of $A$ (see Section~\ref{sec.pre} for definitions). The aim of this paper is to prove 
\begin{theorem}\label{thm.main}
Every affine pointed Hopf domain (over an algebraically closed field of characteristic 0) of Gelfand-Kirillov dimension two that is not a polynomial identity ring is differentially smooth.
\end{theorem}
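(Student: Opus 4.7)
The plan is to reduce Theorem \ref{thm.main} to the already-established constructions of two-dimensional integrable calculi on Ore extensions of $k[x]$ and $k[x^{\pm 1}]$. The reduction is purely structural: one invokes the classification of affine pointed Hopf domains of Gelfand–Kirillov dimension two (due to Goodearl–Zhang, Wang, and Brown, building on Goodearl–Warfield and Lu–Wu–Zhang) and checks that, after discarding the polynomial-identity cases, every algebra on the list is presented as an Ore extension of the kind handled earlier in the paper.

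First I would recall that list. Over an algebraically closed field of characteristic zero, an affine pointed Hopf domain of GK dimension two is isomorphic to one of the following: the commutative algebras $k[x,y]$, $k[x^{\pm 1},y]$, $k[x^{\pm 1},y^{\pm 1}]$; the quantum planes and quantum tori $k_q[x,y]$ and $k_q[x^{\pm 1},y^{\pm 1}]$ with $q\in k^{\times}$; the Jordan plane (or additive Hopf algebra) $k\langle x,y\rangle/(yx-xy-x^{2})$; an infinite-dimensional Taft-type algebra $T(n,1,q)$ with $q$ a primitive $n$-th root of unity; or a generalized Liu algebra $B(n,p_0,\ldots,p_s,q)$. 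The polynomial-identity condition is well understood for each family: a quantum plane (or torus) is PI exactly when $q$ is a root of unity, a Jordan plane is never PI, and the Taft-type and Liu-type algebras split between PI and non-PI according to an explicit arithmetic condition on the parameters. Discarding the PI members then leaves a list, each entry of which admits an Ore presentation of the form $R[y;\sigma,\delta]$ with $R\in\{k[x],k[x^{\pm 1}]\}$.

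Next I would match each remaining entry to the class handled in the previous sections of the paper, where two-dimensional integrable calculi on such Ore extensions are constructed explicitly. Concretely: $k[x,y]$ and the quantum plane at non-root-of-unity $q$ fit the $k[x][y;\sigma,\delta]$ template; the Jordan plane arises as $k[x][y;\id,\delta]$ with $\delta=x^{2}\d/\d x$; the Laurent cases and the quantum torus at non-root-of-unity $q$ arise as $k[x^{\pm 1}][y;\sigma,\delta]$; the non-PI Taft-type and Liu algebras, once one uses that the distinguished group-like generator is invertible, fit the $k[x^{\pm 1}][y;\sigma,\delta]$ template with $\sigma$ the scaling by a non-root-of-unity and $\delta$ an appropriate $\sigma$-derivation. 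For each entry the hypotheses of the earlier theorems (typically conditions ensuring that the induced module of one-forms is finitely generated projective and that an integral form of top degree exists) need to be verified in terms of $\sigma$ and $\delta$; this is what the non-PI hypothesis is used for, as it guarantees that the relevant automorphism has infinite order and the associated Nakayama datum is compatible with the existence of an integral.

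The main obstacle is the last bookkeeping step: rewriting each Hopf algebra in the classification, especially the Liu-type families $B(n,p_0,\ldots,p_s,q)$, as an Ore extension over $k[x^{\pm 1}]$ with $\sigma$ and $\delta$ that literally satisfy the parameter constraints required by the earlier construction theorems. One must exhibit an explicit change of generators (often absorbing a group-like element into the coefficient ring and choosing a skew-primitive element as the Ore variable) and then verify that the resulting $\sigma$ has infinite order and that the relevant $\sigma$-derivation is locally nilpotent or polynomial in $x$ in the sense demanded by the hypotheses. Once this matching is done case by case, the constructions of the earlier sections directly produce a two-dimensional integrable calculus, hence differential smoothness, completing the proof.
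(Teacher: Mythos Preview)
Your overall strategy---invoke the classification of such Hopf algebras and then match each survivor to an Ore extension of $\KK[x]$ or $\KK[x^{\pm 1}]$ already handled in the paper---is exactly what the paper does. The problem is that the classification you quote is garbled, and this creates the very ``obstacle'' you worry about.

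First, the Jordan plane $\KK\langle x,y\rangle/(yx-xy-x^{2})$ is not a Hopf algebra at all: if $x$ and $y$ are primitive then $\Delta(yx-xy)=x^{2}\otimes 1+1\otimes x^{2}\neq \Delta(x^{2})$, and no other pointed coalgebra structure works either. What actually appears in the classification is the enveloping algebra of the two-dimensional non-abelian Lie algebra, with relation $yx-xy=x$; this is $A[1,0;x]$, not $A[1,0;x^{2}]$. Second, the Taft-type and generalized Liu families you list either fail to be domains, fail to be pointed, or are always PI in the relevant parameter range; they do not survive the hypotheses of the theorem. The correct reference is Wang--Zhang--Zhuang \cite{WanZha:Hop}, which shows that an affine pointed Hopf domain of GK dimension two which is not PI is isomorphic to exactly one of three algebras: (a) $A[1,0;x]$ (the enveloping algebra above); (b) $A[q,+;0]$ with $q\neq 1$; (c) $A[1,+;x^{n}-x]$. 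Each of these is already literally an Ore extension of $\KK[x]$ or $\KK[x^{\pm 1}]$ satisfying one of the conditions in Lemma~\ref{lem.homo} or Lemma~\ref{lem.homo.Laurent}, so Propositions~\ref{prop.poly} and~\ref{prop.Laurent} apply immediately---no change of generators, no Liu-type bookkeeping, no ``main obstacle''. Once you replace your list with the correct one, the proof collapses to a one-line check.
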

Hopf algebras satisfying assumptions of Theorem~\ref{thm.main} are classified in \cite{WanZha:Hop} and are obtained by constructions described in \cite{GooZha:Noe}. The latter are examples of skew polynomial algebras or Ore extensions of the polynomial or Laurent polynomial ring in one variable. Therefore, along the way to Theorem~\ref{thm.main} we prove more generally that  members of a particular class of skew polynomial rings of Gelfand-Kirillov dimension two are differentially smooth. This is achieved by constructing explicitly two-dimensional integrable differential calculi over such rings.

\section{Preliminaries}
\label{sec.pre}\setcounter{equation}{0}
All algebras considered in this paper are associative and unital  over an algebraically closed field $\KK$ of characteristic 0.

Let $A$ be an algebra. By an {\em $n$-dimensional differential calculus over $A$} we mean a differential graded algebra $(\Omega A, d)$ such that
\begin{blist}
\item $\Omega A = \oplus_{k=0}^n \Omega^k A$, with $\Omega^0A=A$  and $\Omega^nA \neq 0$;
\item as an algebra, $\Omega A$ is generated by $A$ and $d(A)$;
\item $\ker d\mid_A = \KK.1$.
\end{blist}
Traditionally, the product of elements of $\Omega A$ of positive degree is denoted by $\wedge$.

An $n$-dimensional differential calculus over $A$ {\em admits a volume form}, say $\omega$, if $\Omega^nA$ is freely generated by $\omega$ as a right $A$-module. Associated to a volume form $\omega$ are the algebra automorphism $\nu_\omega: A\to A$ and the right $A$-module isomorphism $\pi_\omega: \Omega^n A\to A$, given by
\begin{equation}\label{nu.pi}
a\omega = \omega\nu_\omega(a), \qquad \pi_\omega(\omega a) = a, \qquad \mbox{for all }a\in A.
\end{equation}
We write $\Ii_k A$ for the space of right $A$-module maps $\Omega^kA\to A$. The direct sum $\Ii A :=\oplus_{k=0}^n \Ii_k A$ is a right $\Omega A$-module with multiplication
$$
(\varphi \cdot \omega')(\omega'') = \varphi (\omega '\wedge \omega''), \qquad \mbox{for all } \omega' \in \Omega^kA,\; \omega'' \in \Omega^m A,\;  \varphi\in \Ii_{k+m}A.
$$
Note that $\pi_\omega \in \Ii_n A$. A volume form $\omega \in \Omega^n A$ is said to be an {\em integrating form} if the left multiplication maps by $\pi_\omega$,
$$
\Omega^k A \to \Ii_{n-k}A, \qquad \omega'\mapsto \pi_\omega\wedge \omega', \qquad k=1,2,\ldots ,n-1,
$$
are bijective. A calculus admitting such a form is said to be {\em integrable}. An integrable calculus can be equivalently characterized by the existence of the bimodule complex $(\Ii_\bullet A,\nabla)$ (known as the {\em complex of integral forms}) isomorphic to the de Rham complex $(\Omega A, d)$. The boundary map $\nabla: \Ii_1 A\to A$ is a {\em divergence}, i.e.\  it satisfies the (right connection) Leibniz rule
$$
\nabla(\varphi \cdot a) = \nabla(\varphi) a + \varphi(da), \qquad \mbox{for all } a\in A,\ \varphi\in \Ii_1A.
$$
 The cokernel map $\Lambda: A \to \coker \nabla$ is called an {\em integral} associated to $\nabla$.
 
 An affine algebra of Gelfand-Kirillov dimension $n$ is said to be {\em differentially smooth} if it admits an $n$-dimensional integrable calculus. Thus in contrast  to other notions of smoothness of algebras  \cite{Sch:smo}, \cite{Van:rel}, which are more of homological nature, differential smoothness insists on existence of a particular differential structure of a specified dimension which admits a non-commutative version of the Hodge star isomorphism. 
Examples of differentially smooth algebras include the coordinate algebras of quantum groups such as $\cO(SL_q(2))$ or of quantum spaces such as the Podle\'s standard two-sphere or the Manin's plane. More surprisingly perhaps they also include coordinate algebras of classically non-smooth manifolds such as the pillow orbifold, cones or singular lens spaces \cite{BrzSit:smo}. It might be worth pointing out that all these examples are also homologically smooth in the sense of \cite{Van:rel}.

A characterization of differentially smooth algebras which will be of main usage in what follows is given in 

\begin{lemma}[Lemma~2.7 in \cite{BrzSit:smo}]\label{lem.integrating}
Let $\Omega A$ by an $n$-dimensional differential calculus over $A$ admitting a volume form $\omega$. Assume that, for all $k=1,2,\ldots, n-1$, there exist a finite number of forms $\omega_i^k, \bomega_i^k \in \Omega^kA$ such that, for all $\omega'\in \Omega^kA$,
\begin{equation}\label{dual.basis}
\omega' = \sum_i\omega_i^k \pi_\omega(\bomega_i^{n-k} \wedge \omega ') = \sum_i \nu_\omega^{-1}\left(\pi_\omega(\omega'\wedge  \omega_i^{n-k} )\right)\bomega_i^k,
\end{equation}
where $\pi_\omega$ and $\nu_\omega$ are defined by \eqref{nu.pi}. Then $\omega$ is an integrating form.
\end{lemma}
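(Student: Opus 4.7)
The plan is to prove bijectivity of each left multiplication map $\ell_k \colon \Omega^k A \to \Ii_{n-k}A$, $\omega' \mapsto \pi_\omega \wedge \omega'$, by constructing an explicit two-sided inverse using the dual basis data. For each $k$ I would set
$$
m_k \colon \Ii_{n-k}A \longrightarrow \Omega^kA, \qquad m_k(\varphi) := \sum_i \nu_\omega^{-1}\!\left(\varphi(\omega_i^{n-k})\right) \bomega_i^k.
$$
This formula is manufactured so that the second identity in \eqref{dual.basis} directly forces $m_k \circ \ell_k = \id$: indeed, $\ell_k(\omega')(\omega_i^{n-k}) = \pi_\omega(\omega' \wedge \omega_i^{n-k})$, and substituting this into $m_k$ reproduces exactly the right-hand side of the second dual basis formula applied to $\omega' \in \Omega^kA$.

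The more substantive step is showing $\ell_k \circ m_k = \id$ on $\Ii_{n-k}A$. For $\varphi \in \Ii_{n-k}A$ and $\eta \in \Omega^{n-k}A$ I would expand
$$
\ell_k(m_k(\varphi))(\eta) = \pi_\omega\!\left(\sum_i \nu_\omega^{-1}(\varphi(\omega_i^{n-k}))\,\bomega_i^k \wedge \eta\right).
$$
Since $\bomega_i^k \wedge \eta \in \Omega^nA$, I would rewrite it as $\omega\cdot \pi_\omega(\bomega_i^k\wedge \eta)$ and then commute the leading scalar past $\omega$ using $a\omega = \omega\nu_\omega(a)$. The $\nu_\omega$ and $\nu_\omega^{-1}$ cancel, and $\pi_\omega$ (being right $A$-linear with $\pi_\omega(\omega a)=a$) collapses the whole expression to $\sum_i \varphi(\omega_i^{n-k})\,\pi_\omega(\bomega_i^k\wedge \eta)$. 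Pulling the coefficient inside $\varphi$ by right $A$-linearity and invoking the \emph{first} identity in \eqref{dual.basis} at degree $n-k$, which reads $\eta = \sum_i \omega_i^{n-k}\,\pi_\omega(\bomega_i^k\wedge\eta)$, the sum telescopes to $\varphi(\eta)$, as required.

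The genuinely delicate point in this strategy is not any one algebraic identity but rather keeping the conventions consistent: the two formulas in \eqref{dual.basis} involve the \emph{same} pair of forms $\omega_i^k, \bomega_i^k$, yet play complementary roles (the second supplies the left inverse, the first the right inverse), and the nuisance factor $\nu_\omega^{-1}$ in the definition of $m_k$ is precisely what is needed to absorb the non-centrality of $\omega$ when moving a coefficient from the left of $\bomega_i^k \wedge \eta$ to the right of $\omega$. Once these bookkeeping issues are set up correctly the computation is forced, so I expect no essential obstacle beyond this careful matching of degrees and sides.
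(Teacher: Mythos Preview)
Your argument is correct. Note, however, that the present paper does not supply its own proof of this lemma: it is quoted verbatim as Lemma~2.7 of \cite{BrzSit:smo} and used as a black box, so there is no in-paper proof to compare against. Your explicit construction of the inverse $m_k$ and the verification that the two identities in \eqref{dual.basis} furnish, respectively, the right and left inverse of $\ell_k$ is exactly the intended mechanism; the only subtlety---that the first identity must be invoked at degree $n-k$ to handle the test form $\eta$---is handled correctly.
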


In the case of Lemma~\ref{lem.integrating}, the divergence is
\begin{equation}\label{hom.con.omega}
\nabla: \Ii_1 A\to A, \qquad \varphi \mapsto  (-1)^{n-1} \sum_i \pi_\omega \left( d\left(\nu_\omega^{-1}\left(\varphi(\omega_i^{1})\right)\right)\bomega_i^{n-1}\right).
\end{equation}

\section{Integrable differential calculi over skew polynomial rings}\label{sec.Ore}\setcounter{equation}{0}
Let $A$ be an algebra and $\sigma$ an automorphism of $A$. The linear map $\delta: A\to A$ is called a {\em $\sigma$-derivation}, provided for all $a,a'\in A$,
$$
\delta(aa') = \delta(a)a' + \sigma(a)\delta(a').
$$
An {\em Ore extension of $A$} or a {\em skew polynomial ring over $A$} associated to a $\sigma$-derivation $\delta$ is an extension of $A$ obtained by adjoining of a generator $y$ that is required to satisfy
$$
ya = \sigma(a) y + \delta(a), \qquad \mbox{for all } a\in A.
$$
Such an extension is denoted by $A[y;\sigma,\delta]$.

\subsection{Differentially smooth Ore extensions of the polynomial ring.}
All automorphisms $\sigma$ of the polynomial ring $\KK[x]$ have the form
\begin{equation}\label{sigma}
\sigma_{q,r} (x) =qx +r,
\end{equation}
where $q,r\in \KK$, $q\neq 0$. Furthermore, any element $p(x)\in \KK[x]$ determines a $\sigma_{q,r}$-derivation of $\KK[x]$ by 
\begin{equation}\label{delta}
\delta_p(f(x)) = \frac{f(\sigma_{q,r}(x)) - f(x)}{\sigma_{q,r} (x)  -x}\ p(x),
\end{equation}
where \eqref{delta} is to be understood as a suitable limit when $q=1$, $r=0$, i.e.\ when $\sigma_{q,r}$ is the identity map. The Ore extension $k[x][y;\sigma_{q,r}, \delta_{p}]$  will be denoted by $A[q,r ;p(x)]$. Thus $A[q,r;p(x)]$ is generated by $x,y$ subject to relation
\begin{equation}\label{rel}
yx = qxy + ry + p(x)
\end{equation}

\begin{lemma}\label{lem.homo}
Let
\begin{equation}\label{nu.start}
\nu_x(x) = x, \quad \nu_x(y) = qy + p'(x) \quad \mbox{and}\quad  \nu_y(x) = \sigma^{-1}_{q,r}(x), \quad \nu_y(y) = y,
\end{equation}
where $p'(x)$ is the $x$-derivative of $p(x)$.  
\begin{zlist} 
\item The symbols defined by \eqref{nu.start} simultaneously  extend to algebra automorphisms $\nu_x, \nu_y$  of $A[q,r;p(x)]$ only in the following three cases:
\begin{blist}
\item $q=1$, $r=0$ with no restriction on $p(x)$;
\item $q=1$, $r\neq 0$ and $p(x)=c$, $c\in \KK$;
\item $q\neq 1$, $p(x) = c(x+ \frac{r}{q-1})$, $c\in \KK$ with no restriction on $r$.
\end{blist}
\item In any of the cases (a)--(b) 
\begin{equation}\label{nu.nu}
\nu_y\circ \nu_x = \nu_x\circ \nu_y.
\end{equation}
\end{zlist}
\end{lemma}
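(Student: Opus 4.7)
The plan is to treat both $\nu_x$ and $\nu_y$ as candidate algebra endomorphisms defined on the two generators $x$ and $y$. By the universal property of the Ore extension $A[q,r;p(x)]$, each of them extends to a well-defined endomorphism of $A[q,r;p(x)]$ if and only if the prescribed images of $x$ and $y$ satisfy the defining relation \eqref{rel}. Thus the first step is to substitute the assignments of \eqref{nu.start} into \eqref{rel} and to simplify the two resulting identities in $\KK[x]\langle y\rangle$.

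For $\nu_x$, which fixes $x$, both sides of \eqref{rel} remain polynomials of degree at most one in $y$; using $xp'(x) = p'(x)x$ I expect the condition to collapse to a single scalar identity of the shape $(q-1)p(x) = p'(x)\bigl((q-1)x + r\bigr)$. For $\nu_y$, which fixes $y$, I would compute $\nu_y(y)\nu_y(x) = y\cdot\sigma_{q,r}^{-1}(x)$ by re-applying \eqref{rel}, and compare it to $q\nu_y(x)\nu_y(y) + r\nu_y(y) + p(\sigma_{q,r}^{-1}(x))$; the relation should reduce to the functional equation $p(qx+r) = qp(x)$ on the polynomial $p$.

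The second step is to solve these two identities jointly for $p$, splitting into the three cases $q=1,\ r=0$; $q=1,\ r\neq 0$; and $q\neq 1$. Cases (a) and (b) are immediate: in (a) both identities are $0=0$, and in (b) the first forces $rp'(x)=0$ hence $p$ is constant, while the second then reads $p(x+r)=p(x)$ which is consistent. In case (c) a degree argument on $p(x) = p'(x)(x + r/(q-1))$ forces $\deg p \leq 1$ and pins down $p$ up to a scalar; a direct substitution will confirm that this $p$ also satisfies $p(qx+r)=qp(x)$. To promote the endomorphisms to automorphisms I would either exhibit explicit inverses on generators (for instance $\nu_x^{-1}(y)=q^{-1}(y - p'(x))$) and verify their compatibility with \eqref{rel} by the same calculation, or observe that each $\nu_*$ acts triangularly on the PBW basis $\{x^iy^j\}$ with invertible leading scalars, hence is bijective.

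For part (2), commutativity reduces to a check on the two generators. On $x$, since $\nu_x(x)=x$, both compositions give $\sigma_{q,r}^{-1}(x)$. On $y$, one computes $\nu_x(\nu_y(y)) = qy + p'(x)$ and $\nu_y(\nu_x(y)) = qy + p'(\sigma_{q,r}^{-1}(x))$, so \eqref{nu.nu} reduces to $p'(x)=p'(\sigma_{q,r}^{-1}(x))$; this is automatic in (a) where $\sigma_{q,r}^{-1}=\id$, and in (b), (c) where $p'$ is a constant. Nothing looks genuinely hard here; the only subtlety is to make sure the polynomial equations extracted from \eqref{rel} are actually equivalent to the stated case division, which is elementary but must be done in both directions.
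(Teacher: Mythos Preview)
Your proposal is correct and follows essentially the same line as the paper: substitute the assignments into the defining relation \eqref{rel}, obtain the two polynomial constraints on $p$, solve them case by case, exhibit inverses to get automorphisms, and then check \eqref{nu.nu} on generators. Your functional equation $p(qx+r)=qp(x)$ is the substitution $x\mapsto qx+r$ applied to the paper's $p(q^{-1}(x-r))=q^{-1}p(x)$, so they are equivalent; and your argument for part~(2), reducing commutativity to $p'(x)=p'(\sigma_{q,r}^{-1}(x))$, is a slightly more uniform version of the paper's case split (which instead observes that one of $\nu_x,\nu_y$ is the identity in cases (a) and (b)).
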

\begin{proof}
(1) 
Clearly (a)--(c) exhaust all possible choices of $q$ and $r$, hence only restrictions on $p(x)$ need be studied in each case. The map $\nu_x$ can be extended to an algebra homomorphism if and only if the definitions of $\nu_x(x)$, $\nu_x(y)$ respect relation \eqref{rel}, i.e.
\begin{equation}\label{condition}
\nu_x(y)\nu_x(x) -\nu_x(qx +r)\nu_x(y) = p(\nu_x(x)).
\end{equation}
This yields a differential equation
\begin{equation}\label{condition.1}
((q-1)x +r)p'(x) = (q-1) p(x).
\end{equation}
If $q=1$ and $r=0$ both sides of \eqref{condition.1} are identically zero, hence there is no restriction on $p(x)$. If $q=1$ and $r\neq 0$, $p'(x)=0$, so $p(x)$ is a constant polynomial. Finally, if $q\neq 1$, by comparing coefficients of polynomials on both sides of \eqref{condition.1} one easily finds that $p(x) = c(x+ \frac{r}{q-1})$ for any $c\in \KK$. Thus $\nu_x$ is an algebra map precisely in one of the cases (a)--(c).

Condition \eqref{condition} for $\nu_y$, leads to the constraint 
\begin{equation}\label{condition.2}
p(\sigma^{-1}_{q,r}(x)) = \delta_p(\sigma^{-1}_{q,r}(x)).
\end{equation}
Since $\sigma^{-1}_{q,r}(x) = q^{-1}(x-r)$ and $\delta_p$ is a $\sigma_{q,r}$-derivation with $\delta_p(x) =p(x)$, \eqref{condition.2} is equivalent to
\begin{equation}\label{condition.3}
p(q^{-1}(x-r)) = q^{-1}p(x).
\end{equation}
Obviously, if $q=1$ and $r=0$, there are no restrictions on $p$. If $q=1$ and $r\neq 0$, then $p(x-r) =p(x)$, which implies that $p(x)$ is a constant polynomial. If $q\neq 1$, one easily checks that $p(x) = c(x+ \frac{r}{q-1})$ solves equation \eqref{condition.3}. This completes the proof that both $\nu_x$ and $\nu_y$ can be extended to algebra endomorphisms of $A[q,r;p(x)]$ if and only if one of the conditions (a)--(c) is satisfied. In all these cases, the inverses of $\nu_x$ and $\nu_y$ can be derived as
$$
\nu_x^{-1}(x) =x, \quad \nu_x^{-1}(y) = q^{-1}(y - p'(x)) \quad \mbox{and}\quad  \nu^{-1}_y(x) = \sigma_{q,r}(x), \quad \nu^{-1}_y(y) = y,
$$
thus completing the proof of the first assertion of the lemma.

(2) Since $\nu_x$, $\nu_y$ are algebra maps suffices it to check the equality \eqref{nu.nu} on the generators $x$, $y$. In the case (a), $\nu_y$ is the identity map, hence \eqref{nu.nu} is automatically satisfied. In the case (b),  $\nu_x$ is the identity map, hence again \eqref{nu.nu} is automatically satisfied. Finally, in the case (c) the equality \eqref{nu.nu} follows by the fact that $x$ is a fixed point of $\nu_x$ and $y$ is a fixed point of $\nu_y$, while the actions on the the other generators simply rescale and translate them by a constant.
\end{proof}

\begin{remark}
Note that if $q\neq 1$ is a root of unity, then \eqref{condition.3} has a richer space of solutions. Suppose $q^n=1$. Since the polynomial $x+ \frac{r}{q-1}$ satisfies  \eqref{condition.3}  so does any linear combination of  polynomials $(x+ \frac{r}{q-1})^{ln+1}$, $l\in \NN$.
\end{remark}

\begin{proposition}\label{prop.poly}
If $A[q,r;p(x)]$  satisfies one of the conditions (a)--(c) in Lemma~\ref{lem.homo}, then it is differentially smooth.
\end{proposition}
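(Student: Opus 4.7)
My plan is to construct explicitly a two-dimensional integrable differential calculus on $A:=A[q,r;p(x)]$ and invoke Lemma~\ref{lem.integrating}; since $A$ has Gelfand--Kirillov dimension two (standard for Ore extensions of the polynomial ring in one variable), this yields differential smoothness.

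The building blocks are the automorphisms $\nu_x,\nu_y$ furnished by Lemma~\ref{lem.homo}. I declare $\Omega^1A$ to be the free right $A$-module on two generators $dx, dy$, with left $A$-action $a\cdot dx=dx\cdot\nu_x(a)$, $a\cdot dy=dy\cdot\nu_y(a)$; this is a well-defined bimodule precisely because $\nu_x,\nu_y$ are algebra endomorphisms of $A$ (Lemma~\ref{lem.homo}(1)). I declare $\Omega^2A$ to be the free right $A$-module on a single generator $\omega$ with left action $a\cdot\omega=\omega\cdot\nu_\omega(a)$, where $\nu_\omega:=\nu_x\circ\nu_y=\nu_y\circ\nu_x$; Lemma~\ref{lem.homo}(2) ensures that $\nu_\omega$ is an algebra automorphism of $A$. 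The wedge product on generators is set to $dx\wedge dy=\omega$, $dy\wedge dx=-\omega$, $dx\wedge dx=dy\wedge dy=0$, and extended $A$-bilinearly. The differential $d$ is defined by $d(x)=dx$, $d(y)=dy$ and the graded Leibniz rule; $d^2=0$ follows because $d^2$ is itself a graded derivation vanishing on the algebra generators, and $\ker d|_A=\KK$ is immediate from the normal form $\sum_i f_i(x)y^i$.

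Three coherence checks need to be carried out. The bimodule structure on $\Omega^1A$ respects the Ore relation $yx=qxy+ry+p(x)$ because $\nu_x,\nu_y$ are algebra maps. The differential respects the same relation by a short direct calculation exploiting $q\,\nu_y(x)+r=x$ (from $\nu_y(x)=\sigma_{q,r}^{-1}(x)$) and $\nu_x(p'(x))=p'(x)$ (from $\nu_x(x)=x$). Finally, the wedge relation $dy\wedge dx=-\omega$ together with the bimodule structure on $\Omega^2A$ require the further identity $\nu_y(p'(x))=p'(x)$, i.e.\ $p'(\sigma_{q,r}^{-1}(x))=p'(x)$. This is the one non-trivial input, but it holds in each of the three cases of Lemma~\ref{lem.homo}: trivially in (a) where $\sigma_{1,0}=\id$, vacuously in (b) where $p'=0$, and because $p'(x)=c$ is constant in (c).

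Integrability is then a direct application of Lemma~\ref{lem.integrating} with $n=2$, volume form $\omega$, $\pi_\omega(\omega\cdot a)=a$, and dual bases $\omega_1^1=dx$, $\omega_2^1=dy$, $\bomega_1^1=-dy$, $\bomega_2^1=dx$. Both identities in \eqref{dual.basis} are right $A$-linear in $\omega'\in\Omega^1A$, so it suffices to verify them on the generators $\omega'\in\{dx,dy\}$, which is a short check; the second identity additionally uses the immediate equality $\nu_\omega^{-1}\circ\nu_y=\nu_x^{-1}$, implied by the commutativity $\nu_x\nu_y=\nu_y\nu_x$. The main obstacle of the whole argument is thus localized: coherence of the two-dimensional calculus, and in particular of the wedge relation on $\Omega^2A$, is governed by the single invariance $p'(\sigma_{q,r}^{-1}(x))=p'(x)$, which the trichotomy in Lemma~\ref{lem.homo} delivers exactly in cases (a)--(c).
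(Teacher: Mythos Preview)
Your overall architecture matches the paper's (build $\Omega^1$ from the automorphisms $\nu_x,\nu_y$, adjoin a rank-one $\Omega^2$, then apply Lemma~\ref{lem.integrating}), but there is a genuine error in your $\Omega^2$: the relation $dy\wedge dx=-dx\wedge dy$ is incompatible with the graded Leibniz rule once $q\neq 1$, i.e.\ precisely in case~(c). Indeed, applying $d$ to the bimodule identity $y\,dx=dx\,\nu_x(y)=dx\,(qy+p'(x))$ in $\Omega^1$ and using $d(dx)=0$ gives
\[
dy\wedge dx \;=\; -\,dx\wedge d\bigl(qy+p'(x)\bigr)\;=\;-q\,dx\wedge dy,
\]
since $dx\wedge dx=0$. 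With your convention $dx\wedge dy=\omega$ and $dy\wedge dx=-\omega$ this forces $(q-1)\omega=0$, hence $\omega=0$ whenever $q\neq 1$, and the calculus collapses. Equivalently, your formula for $d$ on $\Omega^1$, namely $d(dx\,a+dy\,b)=-dx\wedge da-dy\wedge db$, fails the \emph{left} Leibniz rule $d(y\cdot dx)=dy\wedge dx$: the right-hand side is $-\omega$ in your calculus, but the left-hand side computes to $-q\omega$.

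The repair is exactly what the paper does: impose $dy\wedge dx=-q\,dx\wedge dy$ (this is forced, not optional), so that $\omega=dx\wedge dy$ is still a free generator with $\nu_\omega=\nu_y\circ\nu_x$, and then take $\bar\omega_1=-q^{-1}dy$ (rather than $-dy$) in Lemma~\ref{lem.integrating}. With that single $q$-twist everything you wrote goes through; note also that the ``further identity'' $\nu_y(p'(x))=p'(x)$ you isolate is nothing but $\nu_x\nu_y(y)=\nu_y\nu_x(y)$, i.e.\ it is already contained in Lemma~\ref{lem.homo}(2) rather than being an extra condition.
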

\begin{proof}
The algebras $A[q,r;p(x)]$ have Gelfand-Kirillov dimension two, hence a two-dimensional integrable calculus need be constructed. Let  $\Omega^1A[q,r;p(x)]$ be a free right $A[q,r;p(x)]$-module of rank two with generators $dx$, $dy$. Define the left $A[q,r;p(x)]$-module structure by 
\begin{equation}\label{rel.diff.gen}
adx = dx \nu_x(a), \qquad ady =  dy\nu_y(a), \qquad \mbox{for all } a\in A[q,r;p(x)],
\end{equation}
where $\nu_x$, $\nu_y$ are algebra automorphisms defined in Lemma~\ref{lem.homo}. Explicitly, in terms of generators, the relations in  $\Omega^1A[q,r;p(x)]$ come out as
\begin{equation}\label{rel.diff}
xdx = dx x, \quad xdy = q^{-1}dyx -q^{-1}rdy, \quad ydx = qdx y +dxp'(x), \quad ydy=dyy.
\end{equation}
We would like to extend $x\mapsto dx$, $y\mapsto dy$ to a map $d: A[q,r;p(x)]\to \Omega^1A[q,r;p(x)]$ satisfying the Leibniz rule. This is possible if the Leibniz rule is compatible with the only nontrivial relation \eqref{rel}, i.e.\ if
\begin{equation}\label{poly.Leibniz}
dyx +ydx = qdxy +qxdy + rdy + dp(x).
\end{equation}
Note that in view of the first of equations \eqref{rel.diff} which defines the usual commutative calculus on the polynomial ring $\KK[x]$, $dp(x) = dx p'(x)$. One easily checks using  \eqref{rel.diff} that equality 
\eqref{poly.Leibniz} is true. 

Define linear maps $\partial_x, \partial_y: A[q,r;p(x)] \to A[q,r;p(x)]$ by
\begin{equation} \label{partial}
d(a) = dx \partial_x(a) + dy \partial_y(a), \qquad \mbox{for all } a\in A[q,r;p(x)].
\end{equation}
These are well-defined since $dx$ and $dy$ are free generators of the right $A[q,r;p(x)]$-module $\Omega^1A[q,r;p(x)]$. By the same token $d(a) =0$ if and only if $\partial_x(a) =\partial_y(a) =0$. Using relations \eqref{rel.diff.gen}  and definitions of the maps $\nu_x$ and $\nu_y$ one easily finds that
\begin{equation}\label{partial.mn}
\partial_x(x^ky^l) = kx^{k-1}y^l, \qquad \partial_y(x^ky^l) = l\sigma_{q,r}(x^{k})y^{l-1} = l(qx+r)^{k}y^{l-1}.
\end{equation}
In particular, $\partial_x(\sum_{k,l} c_{kl}x^ky^l) =0$ if and only if $c_{k,l}=0$ whenever $(k,l)\neq (0,0)$. Thus $d(a) =0$ if and only if $a$ is a scalar multiple of the identity.

The universal extension of $d$ to higher forms  compatible with \eqref{rel.diff} gives the following rules for $\Omega^2A[q,r;p(x)]$
\begin{equation}\label{rel.diff.2}
dx\wedge dx = dy\wedge dy =0, \qquad dy\wedge dx = -qdx\wedge dy.
\end{equation}
Note that the last of the necessary equations \eqref{rel.diff.2} does not induce any additional constraints since for all $a\in A$,
\begin{eqnarray*}
a(dy\wedge dx + qdx\wedge dy) &=& dy\wedge dx \nu_x\circ\nu_y(a) + qdx\wedge dy \nu_y\circ\nu_x(a)\\
&=&  qdx\wedge dy \left(\nu_y\circ\nu_x(a) - \nu_x\circ\nu_y(a)\right) = 0,
\end{eqnarray*}
by \eqref{nu.nu}. Thus $\omega := dx\wedge dy$ freely generates $\Omega^2A[q,r;p(x)]$ as a right $A[q,r;p(x)]$-module. Furthermore,
$$
a\omega = \omega \nu_y(\nu_x(a)), \qquad \mbox{for all } a\in A[q,r;p(x)],
$$
and since both $\nu_x$ and $\nu_y$ are automorphisms, $\omega$ is a volume form and 
$\nu_\omega = \nu_y\circ \nu_x.$
This completes the construction of a two-dimensional differential calculus $\Omega A[q,r;p(x)]$ with a volume form.

Define 
\begin{equation}\label{omegas}
\omega_1 = dx, \quad \bomega_1 = -q^{-1}dy, \quad \mbox{and} \quad \omega_2 = dy, \quad \bomega_2 = dx.
\end{equation}
Then, for all $\omega' = dx a + dy b$,
\begin{eqnarray*}
\omega_1\pi_\omega(\bomega_1\wedge \omega') + \omega_2\pi_\omega(\bomega_2\wedge \omega') &=& dx\pi_\omega(-q^{-1}dy\wedge dxa) +  dy \pi_\omega(dx\wedge dyb) \\
&=& dx a + dy b = \omega'.
\end{eqnarray*}
Furthermore, using relations \eqref{rel.diff.gen} we can compute,
\begin{eqnarray*}
\sum_i \nu_\omega^{-1}\left(\pi_\omega(\omega'\wedge  \omega_i )\right)\bomega_i  &=& -q^{-1}\nu_\omega^{-1}\left(\pi_\omega(dyb\wedge dx)\right)dy+ \nu_\omega^{-1}\left(\pi_\omega(dxa\wedge dy\right)dx\\
&=& \nu_y^{-1}(b)dy +  \nu_y^{-1}\circ \nu_x^{-1}\circ  \nu_y(a)dx\\
&=& dy b + dx \nu_x\circ\nu_y^{-1}\circ \nu_x^{-1}\circ  \nu_y(a) =\omega',
\end{eqnarray*}
where the last equality follows by \eqref{nu.nu}. By Lemma~\ref{lem.integrating}, the calculus $\Omega A[q,r;p(x)]$ is integrable, and hence $A[q,r;p(x)]$ is a differentially smooth algebra as claimed.
\end{proof}

\begin{remark}\label{rem.div}
The maps $\varphi_x, \varphi_y: \Omega^1 A[q,r;p(x)]\to A[q,r;p(x)]$ defined by
$$
\varphi_x(dx a+ dyb) = a, \qquad \varphi_y(dx a+ dyb) =b,
$$
form a free basis for the module $\Ii_1 A[q,r;p(x)]$. An easy calculation reveals that 
\begin{equation}\label{div.zero}
\nabla(\varphi_x) = \nabla(\varphi_y) =0,
\end{equation}
where $\nabla$ is the divergence defined by \eqref{hom.con.omega}. Therefore, for all $a,b\in A[q,r;p(x)]$,
$$
\nabla(\varphi_x\cdot a +\varphi_y\cdot b) = \partial_x(a) +\partial_y(b),
$$
where $\partial_x$, $\partial_y$  are defined by  \eqref{partial}. It follows that $\nabla$ is surjective, hence the corresponding integral $\Lambda$ is identically zero. 

Directly by their construction $\partial_x$ and  $\partial_y$  are skew derivations (with corresponding automorphisms $\nu_x$, $\nu_y$), and $\Omega^1 A[q,r;p(x)]$ is the calculus induced by them in the sense of \cite{BrzElK:int}. Therefore, $\nabla$ is a unique divergence that satisfies condition \eqref{div.zero} by \cite[Theorem~3.4]{BrzElK:int}.
\end{remark}

\subsection{Differentially smooth Ore extensions of the Laurent polynomial ring.}
Automorphisms of the Laurent polynomial ring $\KK[x,x^{-1}]$ have the form
$$
\sigma_{q,\pm} (x) = qx^{\pm 1}, \qquad q\in \KK\setminus\{0\}.
$$
As in the case of the polynomial ring, $\sigma_{q,\pm}$-derivations are determined by their values at $x$, and we write $\delta_p$ for the derivation such that $\delta_p(x) =p(x)$. An Ore extension $\KK[x,x^{-1}][y;\sigma_{q,\pm}, \delta_p]$ is denoted by $A[q,\pm; p(x)]$. Explicitly, $A[q,\pm; p(x)]$ is generated by $x$, its inverse $x^{-1}$ and by $y$ such that
\begin{equation}\label{rel.Laurent}
yx = q x^{\pm 1}y + p(x).
\end{equation}

\begin{lemma}\label{lem.homo.Laurent}
Define
\begin{equation}
\bnu_x(x) = x, \quad \bnu_x(y) = -qx^{-2}y + p'(x) \quad \mbox{and}\quad  \bnu_y(x) = qx^{-1}, \quad \bnu_y(y) = y,
\end{equation}
Then both $\bnu_x$, $\bnu_y$ simultaneously extend to algebra automorphisms of $A[q,-;p(x)]$ if and only if
\begin{equation}\label{laurent.poly}
p(x) = c(x-qx^{-1}), \qquad c\in \KK.
\end{equation}
Furthermore, the resulting automorphisms satisfy the following equality
\begin{equation}\label{nu.nu.bar}
\bnu_y\circ \bnu_x(a)x^2 = x^2\bnu_x\circ \bnu_y(a), \qquad \mbox{for all } a\in A[q,-;c(x-qx^{-1})].
\end{equation}
\end{lemma}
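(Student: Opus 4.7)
My plan is to mirror the proof of Lemma~\ref{lem.homo}, adapted to the Laurent polynomial ring: derive from the requirement that $\bnu_x$ (respectively $\bnu_y$) respect the defining relation \eqref{rel.Laurent} a constraint on $p(x)$, solve the two constraints simultaneously, exhibit explicit inverses, and finally verify \eqref{nu.nu.bar} on the two generators.

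For $\bnu_x$ I would substitute $\bnu_x(x)=x$, $\bnu_x(y)=-qx^{-2}y+p'(x)$ into the equation
$\bnu_x(y)\bnu_x(x)=q\bnu_x(x)^{-1}\bnu_x(y)+p(\bnu_x(x))$
and reorder $yx$ on the left-hand side via \eqref{rel.Laurent}; the terms linear in $y$ cancel, leaving the identity in $\KK[x,x^{-1}]$
$$(x^2-q)\,p'(x)=(x+qx^{-1})\,p(x).$$
Writing $p(x)=\sum_n a_n x^n$ (a finite sum) and comparing coefficients of $x^m$ yields the two-sided recurrence $q(n+1)\,a_n=(n-3)\,a_{n-2}$. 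On the odd sublattice, the indicial zeros at $n=3$ and $n=-1$ force $a_n=0$ for all odd $|n|\geq 3$, while the relation at $n=1$ gives $a_{-1}=-qa_1$. On the even sublattice there is no indicial zero, so every even coefficient is a nonzero scalar multiple of $a_0$; the finite support of $p$ therefore forces $a_0=0$ and hence all even coefficients to vanish. This pins down $p(x)=c(x-qx^{-1})$ for some $c\in\KK$.

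The analogous check for $\bnu_y$ is simpler. Substituting $\bnu_y(y)=y$, $\bnu_y(x)=qx^{-1}$ and using $yx^{-1}=q^{-1}(xy-p(x))$ (obtained by inverting \eqref{rel.Laurent}), the compatibility condition collapses to the functional equation $p(qx^{-1})=-p(x)$, which is readily seen to hold for $p(x)=c(x-qx^{-1})$; so no further restriction arises, and the two conditions together are equivalent to \eqref{laurent.poly}. Invertibility follows from the explicit inverses $\bnu_x^{-1}(x)=x$, $\bnu_x^{-1}(y)=-q^{-1}x^2(y-p'(x))$ and $\bnu_y^{-1}(x)=qx^{-1}$, $\bnu_y^{-1}(y)=y$, verified on generators.

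Finally, to check \eqref{nu.nu.bar} I would test on the two generators. On $x$ both $\bnu_y\bnu_x$ and $\bnu_x\bnu_y$ return $qx^{-1}$, so \eqref{nu.nu.bar} reduces to $qx^{-1}\cdot x^2=x^2\cdot qx^{-1}$. On $y$ one computes
$$\bnu_y\circ\bnu_x(y)=-q^{-1}x^2y+c+cq^{-1}x^2,\qquad \bnu_x\circ\bnu_y(y)=-qx^{-2}y+c+cqx^{-2},$$
and, using the auxiliary identity $yx^2=q^2x^{-2}y+cx^2-cq^2x^{-2}$ (obtained by iterating \eqref{rel.Laurent} twice), a short manipulation shows that $\bnu_y\bnu_x(y)\cdot x^2$ and $x^2\cdot\bnu_x\bnu_y(y)$ both equal $-qy+cq+cx^2$. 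The main obstacle is the coefficient-matching argument for $\bnu_x$: unlike the polynomial setting of Lemma~\ref{lem.homo}, where the analogous differential equation \eqref{condition.1} admits only short lists of solutions, its Laurent analogue is a two-sided recurrence whose space of formal solutions is infinite-dimensional, and it is essential to invoke the finite-support requirement on $p$ to eliminate the even-degree formal family and isolate the one-parameter answer~\eqref{laurent.poly}.
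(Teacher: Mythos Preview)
Your proposal is correct and follows essentially the same route as the paper: derive the compatibility equation for $\bnu_x$ (the paper writes it as $(x-qx^{-1})p'(x)=(1+qx^{-2})p(x)$, which is your equation divided by $x$), verify that $\bnu_y$ is compatible for the resulting $p$, exhibit the inverses (your $\bnu_x^{-1}(y)$ agrees with the paper's $q^{-1}(c(x^2+q)-x^2y)$), and check \eqref{nu.nu.bar} on the generators using the iterated relation. The only substantive difference is that the paper asserts without detail that \eqref{laurent.poly} gives the only Laurent-polynomial solutions of the differential equation, whereas you supply the coefficient recursion and the finite-support argument that rules out the even-degree formal family; this fills a genuine gap in the paper's exposition.
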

\begin{proof}
Arguing as in the proof of Lemma~\ref{lem.homo}, $\bnu_x$ can be extended to an algebra homomorphism provided
$$
\bnu_x(y)\bnu_x(x) = q \bnu_x(x)^{-1}\bnu_x(y) + p(\bnu_x(x)).
$$
This leads to the differential equation
$$
(x-qx^{-1})p'(x) = (1+qx^{-2})p(x),
$$
with polynomials \eqref{laurent.poly} as only solutions. On the other hand, one easily checks that
$$
\bnu_y(y)\bnu_y(x) = q \bnu_y(x)^{-1}\bnu_y(y) + c(\bnu_y(x) - q\bnu_y(x)^{-1}).
$$
Thus also $\bnu_y$ can be extend to an algebra endomorphism. Clearly, $\bnu_y$ is then an automorphism. The inverse of $\bnu_x$ is determined from $\bnu_x^{-1}(y) = q^{-1}(c(x^2+q) -x^2y)$.

Since $\bnu_x$ and $\bnu_y$ are algebra maps, the equality \eqref{nu.nu.bar} needs only to be checked for $a=x,y$. The first case is trivial, in the second case
$$
\bnu_y\circ \bnu_x(y)x^2 =-q^{-1}x^2yx^2 + c(1+q^{-1}x^2)x^2 = -qy +c(x^2 +q) = x^2 \bnu_x\circ \bnu_y(y),
$$
where the middle equality follows by a repeated use of \eqref{rel.Laurent} with $p(x) = c(x-qx^{-1})$. This completes the proof of the lemma.
\end{proof}

\begin{remark}
More generally $\bnu_y$ alone extends to an algebra automorphism whenever $p(x) = \sum_{i=1}^na_i(x^i-q^ix^{-i})$.
\end{remark}

\begin{proposition}\label{prop.Laurent}
Algebras $A[1,+;p(x)]$, $A[q,+; c]$ and $A[q,-;c(x-qx^{-1})]$ are differentially smooth.
\end{proposition}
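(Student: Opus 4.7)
The plan is to mirror Proposition~\ref{prop.poly} in each of the three cases: construct explicitly a two-dimensional differential calculus admitting a volume form, then exhibit a dual basis so that Lemma~\ref{lem.integrating} applies. Since each of these algebras has Gelfand-Kirillov dimension two, this will yield differential smoothness.

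The first step in each case is to pin down the (twisted-)commuting automorphisms used to twist the bimodule structure of $\Omega^1$. For $A[1,+;p(x)]$ I would take $\nu_x(x)=x$, $\nu_x(y)=y+p'(x)$, $\nu_y=\id$; for $A[q,+;c]$ take $\nu_x(x)=x$, $\nu_x(y)=qy$, $\nu_y(x)=q^{-1}x$, $\nu_y(y)=y$; for $A[q,-;c(x-qx^{-1})]$ use the maps $\bnu_x,\bnu_y$ of Lemma~\ref{lem.homo.Laurent}. In all three cases $\Omega^1$ is declared to be the free right module on $dx,dy$ with left action $a\,dx=dx\,\nu_x(a)$, $a\,dy=dy\,\nu_y(a)$ (with $\bnu_x,\bnu_y$ in the third). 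The forced identity $d(x^{-1})=-x^{-2}\,dx$ is compatible with the bimodule structure precisely because $\nu_x$ (respectively $\bnu_x$) fixes $x$, and the remaining Leibniz compatibility reduces to a single identity on the defining relation $yx=qx^{\pm1}y+p(x)$, which is checked by expanding $y\,dx$ and $x\,dy$ through the bimodule rules.

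The construction of $\Omega^2$ is where the three cases diverge. For $A[1,+;p(x)]$ and $A[q,+;c]$ the automorphisms $\nu_x,\nu_y$ commute (verified directly on generators), so imposing $dx\wedge dx=dy\wedge dy=0$ and $dy\wedge dx=-q\,dx\wedge dy$ (with $q=1$ in the first case) is consistent with the left action, and $\omega=dx\wedge dy$ is a volume form with $\nu_\omega=\nu_y\circ\nu_x$, exactly as in Proposition~\ref{prop.poly}. For $A[q,-;c(x-qx^{-1})]$, however, $\bnu_x$ and $\bnu_y$ fail to commute; only the twisted identity \eqref{nu.nu.bar} holds. Applying $d$ to the bimodule relation $x\,dy=dy\,qx^{-1}$ then forces the non-scalar commutation $dy\wedge dx=q^{-1}\,dx\wedge dy\cdot x^{2}$, and \eqref{nu.nu.bar} is exactly what makes this relation compatible with the left action on $\Omega^2$. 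In this case too, $\omega=dx\wedge dy$ freely generates $\Omega^2$ as a right module, now with $\nu_\omega=\bnu_y\circ\bnu_x$.

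Finally one must produce the forms $\omega_i,\bomega_i$ required by Lemma~\ref{lem.integrating}. For the first two families the dual basis \eqref{omegas} of Proposition~\ref{prop.poly} transfers verbatim and the verification of \eqref{dual.basis} is formally the same calculation. For $A[q,-;c(x-qx^{-1})]$ the $x^{2}$ factor inside $\Omega^2$ forces $\bomega_1$ to carry a compensating power of $x$; I expect the correct choice to be $\omega_1=dx$, $\bomega_1=q\,dy\cdot x^{-2}$, $\omega_2=dy$, $\bomega_2=dx$. The first identity in \eqref{dual.basis} then falls out from the formula for $dy\wedge dx$. The second identity is the main obstacle: it requires pulling $\nu_\omega^{-1}=\bnu_x^{-1}\circ\bnu_y^{-1}$ through a monomial in $x^{\pm 2}$ and then re-expressing the outcome via the bimodule rules. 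I anticipate this to be controlled by combining \eqref{nu.nu.bar} with the observation that $\bnu_y$ squares to the identity on the generators, so that the required cancellations collapse the expression back to $\omega'$ and integrability is established.
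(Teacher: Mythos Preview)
Your overall strategy matches the paper's: build a two-dimensional calculus with $\Omega^1$ twisted by a pair of automorphisms and then invoke Lemma~\ref{lem.integrating}. Two points of divergence deserve comment.

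First, the paper does not redo the construction for $A[1,+;p(x)]$ and $A[q,+;c]$ from scratch: it simply observes that these arise as localisations of algebras already handled in Proposition~\ref{prop.poly}, and that the calculi constructed there localise. Your direct construction is fine for $A[1,+;p(x)]$, but your choice $\nu_x(y)=qy$ for $A[q,+;c]$ fails when $q\neq 1$ and $c\neq 0$: applying it to the relation $yx=qxy+c$ yields $qy\cdot x=q^2xy+qc$ on the left but $q\,x\cdot qy+c=q^2xy+c$ on the right, so $\nu_x$ is not an algebra map. The repair is either to pass through the isomorphism $A[q,+;c]\cong A[q,+;0]$ given by $y\mapsto y+\tfrac{c}{q-1}x^{-1}$, or to follow the paper and localise the calculus on $A[q,0;cx]$ after noting that $A[q,+;c]\cong A[q,+;cx]$ via $y\mapsto xy$.

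Second, for $A[q,-;c(x-qx^{-1})]$ the paper makes the opposite choice of volume form, taking $\bomega=dy\wedge dx$ with $\nu_{\bomega}=\bnu_x\circ\bnu_y$ and dual basis $\omega_1=dx$, $\bomega_1=dy$, $\omega_2=dy$, $\bomega_2=q\,dx\,x^{-2}$. Your $\omega=dx\wedge dy$ differs from $\bomega$ by the unit $qx^{-2}$, so it is an equally valid volume form, and your proposed dual basis is the paper's with the roles of $\omega_i$ and $\bomega_i$ interchanged; both verifications go through. Your anticipated check of the second identity in \eqref{dual.basis} does in fact collapse: after using $\bnu_y^2=\id$ and $\bnu_x(x)=x$, it reduces to showing $x^2\,\bnu_y\bnu_x^{-1}\bnu_y\bnu_x(b)\,x^{-2}=b$, and rewriting \eqref{nu.nu.bar} as $\bnu_y\bnu_x(b)=x^2\,\bnu_x\bnu_y(b)\,x^{-2}$ gives exactly this.
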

\begin{proof}
All these algebras have Gelfand-Kirillov dimension two, so two-dimensional integrable calculi need be constructed. The first two algebras are localisations of $A[q,0;p(x)]$, and localisations of differential calculi described in the proof of Proposition~\ref{prop.poly} yield integrable two-dimensional calculi, hence $A[1,+;p(x)]$ and  $A[q,+; cx]$ are differentially smooth. An integrable calculus $\Omega A[q,-;c(x-qx^{-1})]$ over $A[q,-;c(x-qx^{-1})]$ is defined as follows.

$\Omega^1A[q,-;c(x-qx^{-1})]$ is a right $A[q,-;c(x-qx^{-1})]$-module freely generated by $dx$ and $dy$. The left module structure is defined by
\begin{equation}\label{rel.diff.gen.Laurent}
adx = dx \bnu_x(a), \qquad ady =  dy\bnu_y(a), \qquad \mbox{for all } a\in A[q,-;c(x-qx^{-1})].
\end{equation}
This bimodule extends to a graded algebra 
generated by $x$, $y$, $dx$, $dy$ subject to the following relations
\begin{equation}\label{rel.diff.Laurent}
xdx = dx x, \quad xdy = qdyx^{-1}, \quad ydx = -qdx x^{-2}y +cdx(1+qx^{-2}), \quad ydy=dyy.
\end{equation}
\begin{equation}\label{rel.diff.Laurent.2}
dx\wedge dx = dy\wedge dy =0, \qquad dx\wedge dy = qdy\wedge dx x^{-2}.
\end{equation}
With these definitions the assignment $x\mapsto dx$, $y\mapsto dy$ can be extended to an exterior differential, thus yielding a  differential graded algebra over $A[q,-;c(x-qx^{-1})]$.

Linear endomorphisms $\partial_x$, $\partial_y$ of $A[q,-;c(x-qx^{-1})]$ can be defined by the formulae analogous to \eqref{partial}. In particular $\partial_x$ will have exactly the form $\eqref{partial.mn}$ with the only difference that $k$ is an integer, and the same argument as in the proof of Proposition~\ref{prop.poly} affirms that $d(a) =0$ only if $a$ is a scalar multiple of the identity. Thus  $\Omega A[q,-;c(x-qx^{-1})]$ is a calculus over $A[q,-;c(x-qx^{-1})]$.

The last of  equations \eqref{rel.diff.Laurent.2} does not induce any additional constraints since for all $a\in A$,
\begin{eqnarray*}
a(dx\wedge dy - qdy\wedge dx x^{-2}) &=& dx\wedge dy \bnu_y\circ\bnu_x(a) - qdy\wedge dx \bnu_x\circ\bnu_y(a) x^{-2}\\
&=&  -qdy\wedge dx \left(x^{-2}\bnu_x\circ\bnu_y(a) - \bnu_y\circ\bnu_x(a)x^{-2}\right) = 0,
\end{eqnarray*}
by \eqref{nu.nu.bar}. Thus the module $\Omega^2 A[q,-;c(x-qx^{-1})]$ is freely generated by $\bomega = dy\wedge dx$. This is a volume form with the corresponding automorphism $\nu_{\bomega} = \bnu_x\circ \bnu_y$. 
Finally, setting
$$
\omega_1 = dx, \quad \bomega_1 = dy, \quad \mbox{and} \quad \omega_2 = dy, \quad \bomega_2 = qdxx^{-2},
$$
one can verify that the requirements of Lemma~\ref{lem.integrating} are fulfilled (in proving of the second of equalities \eqref{dual.basis}, the relation \eqref{nu.nu.bar} plays a crucial role).
\end{proof}

\subsection{Proof of Theorem~\ref{thm.main}.}
By \cite{WanZha:Hop} if $H$ is a pointed affine Hopf domain of Gelfand-Kirillov dimension 2 that does not satisfy a polynomial identity then it falls into the following three classes:
\begin{blist}
\item $H$ is the universal enveloping algebra of the two-dimensional solvable Lie algebra, thus it is an algebra of type $A[1,0;x]$ with both $x$ and $y$ primitive elements.
\item $H$ is isomorphic to the algebra $A[q,+;0]$, $q\neq 1$, with $x$ a grouplike element and the coproduct $\Delta(y) = y\otimes 1 + x^n\otimes y$.
\item $H$ is isomorphic to the algebra $A[1,+;x^n-x]$, with $x$ a grouplike element and the coproduct $\Delta(y) = y\otimes x^{n-1} + 1\otimes y$
\end{blist}
All these algebras satisfy conditions in Lemma~\ref{lem.homo} and Lemma~\ref{lem.homo.Laurent}, and hence they are differentially smooth by Proposition~\ref{prop.poly} and Proposition~\ref{prop.Laurent}.

\end{document}